\theoremstyle{plain}
\newtheorem{thm}{Theorem}[section]
\newtheorem{pro}[thm]{Proposition}
\newtheorem{lem}[thm]{Lemma}
\newtheorem{cor}[thm]{Corollary}
\theoremstyle{definition}
\newtheorem{rem}[thm]{Remark}
\newcommand{\N}{\mathbb{N}}
\newcommand{\Q}{\mathbb{Q}}
\newcommand{\R}{\mathbb{R}}
\newcommand{\C}{\mathbb{C}}
\newcommand{\can}{\mathrm{can}}
\DeclareMathOperator{\Div}{Div}
\DeclareMathOperator{\Supp}{Supp}
\begin{document}
	
	\title[A few remarks on effectivity and good minimal models]{A few remarks on effectivity\\ and good minimal models}
	
	\author[V.\ Lazi\'c]{Vladimir Lazi\'c}
	\address{Fachrichtung Mathematik, Campus, Geb\"aude E2.4, Universit\"at des Saarlandes, 66123 Saarbr\"ucken, Germany}
	\email{lazic@math.uni-sb.de}

	\thanks{I gratefully acknowledge support by the Deutsche Forschungsgemeinschaft (DFG, German Research
Foundation) – Project-ID 286237555 – TRR 195. I would like to thank Junpeng Jiao, Fanjun Meng, Nikolaos Tsakanikas, Zhixin Xie and Ziquan Zhuang for many useful comments and suggestions.
	\newline
		\indent 2020 \emph{Mathematics Subject Classification}: 14E30.\newline
		\indent \emph{Keywords}: Minimal Model Program, Nonvanishing conjecture, Abundance conjecture}
	
	\begin{abstract}
	We prove several results relating the nonvanishing and the existence of good minimal models of different pairs that have the same underlying variety.
	\end{abstract}

	\maketitle
	
	\begingroup
		\hypersetup{linkcolor=black}
		\setcounter{tocdepth}{1}
		\tableofcontents
	\endgroup
	
\section{Introduction}

The purpose of this short note is to address the following question: for a fixed normal projective variety $X$ over $\C$, if one knows the nonvanishing or the existence of good minimal models for \emph{one} log canonical pair $(X,\Delta)$, can one deduce the nonvanishing or the existence of good minimal models for \emph{another} log canonical pair $(X,\Delta')$ with $K_X+\Delta'$ pseudoeffective?

When the pair $(X,\Delta)$ is not of log general type, that is, if $K_X+\Delta$ is not a big $\R$-divisor, then much can be said under certain assumptions, as we will see below. 

\medskip

\noindent{\sc Effectivity.}
The first result is that effectivity is, in some sense, a closed condition on the set of log canonical pairs on a fixed variety.

\begin{thm}\label{thm:effectivethreshold}
Assume the existence of good minimal models for projective log canonical pairs in dimensions at most $n-1$. 

Let $(X,\Delta)$ be a projective log canonical pair of dimension $n$ such that $K_X+\Delta$ is pseudoeffective. Let $G\geq0$ be an $\R$-Cartier $\R$-divisor such that $0\leq\kappa_\iota(X,K_X+\Delta+G)<n$,\footnote{Here, $\kappa_\iota$ denotes the invariant Iitaka dimension, see Section \ref{sec:prelim}.} and set
$$\delta:=\inf\{t\geq0\mid \kappa_\iota(X,K_X+\Delta+tG)\geq0\}.$$
Then:
\begin{enumerate}[\normalfont (a)]
\item[(a)] $\kappa_\iota(X,K_X+\Delta+\delta G)\geq0$, and
\item[(b)] if $\kappa_\iota(X,K_X+\Delta+G)>0$, then $\delta=0$ and $(X,\Delta)$ has a good minimal model.
\end{enumerate}
\end{thm}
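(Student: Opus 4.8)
The plan is to reduce the statement to a single nonvanishing propagation claim and then prove that claim by passing to the Iitaka fibration.

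Since $\kappa_\iota(X,K_X+\Delta+G)\ge 0$ we have $\delta\le 1$. If $\kappa_\iota(X,K_X+\Delta+G)=0$, I would prove (a) directly: for $t\in(\delta,1]$, monotonicity of $t\mapsto\kappa_\iota(X,K_X+\Delta+tG)$ (which is non-decreasing, as $G\ge 0$) forces $\kappa_\iota(X,K_X+\Delta+tG)=0$, so $K_X+\Delta+tG$ admits a \emph{unique} effective $\R$-divisor $E_t$ in its $\R$-linear class; uniqueness gives $E_t=E_{t'}+(t-t')G$ for $\delta<t'<t\le1$, hence $E_{t'}=E_1-(1-t')G\ge 0$, and letting $t'\to\delta^+$ yields $E_1-(1-\delta)G\ge 0$ with $E_1-(1-\delta)G\sim_\R K_X+\Delta+\delta G$, which is (a) (and (b) is vacuous here). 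So it remains to establish the implication $(\dagger)$: \emph{if $K_X+\Delta$ is pseudoeffective and $1\le\kappa_\iota(X,K_X+\Delta+G)<n$, then $\kappa_\iota(X,K_X+\Delta)\ge 0$}. Granting $(\dagger)$, in the case $\kappa_\iota(X,K_X+\Delta+G)\ge 1$ we get $\kappa_\iota(X,K_X+\Delta)\ge 0$, so $\delta=0$ and $\kappa_\iota(X,K_X+\Delta+\delta G)=\kappa_\iota(X,K_X+\Delta)\ge 0$ (this is (a)); moreover $0\le\kappa_\iota(X,K_X+\Delta)\le\kappa_\iota(X,K_X+\Delta+G)<n$, so the known reduction for log canonical pairs of dimension $n$ with $0\le\kappa_\iota<n$, which is available under the assumed existence of good minimal models in dimensions at most $n-1$, shows that $(X,\Delta)$ has a good minimal model (this is (b)).

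To prove $(\dagger)$, set $\kappa:=\kappa_\iota(X,K_X+\Delta+G)\in\{1,\dots,n-1\}$ and fix $D\ge 0$ with $D\sim_\R K_X+\Delta+G$ and $\kappa(X,D)=\kappa$. I would pass to a log resolution $\pi\colon X'\to X$ of $(X,\Delta+D)$ on which the Iitaka fibration of $\pi^*(K_X+\Delta+G)$ is a morphism $f\colon X'\to Z$ with $Z$ smooth projective of dimension $\kappa$, and write $K_{X'}+\Delta_{X'}=\pi^*(K_X+\Delta)+E$ with $\Delta_{X'},E\ge 0$, $\Supp E\subseteq\mathrm{Exc}(\pi)$ and $(X',\Delta_{X'})$ log canonical. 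Since adding an effective $\pi$-exceptional divisor to a pullback does not change $\kappa_\iota$, it suffices to prove $\kappa_\iota(X',K_{X'}+\Delta_{X'})\ge 0$. For a general fibre $F$ of $f$ one has $\dim F=n-\kappa\le n-1$, $(F,\Delta_{X'}|_F)$ log canonical, and $(K_{X'}+\Delta_{X'})|_F=K_F+\Delta_{X'}|_F$ pseudoeffective (pseudoeffectivity restricts to general fibres, and $\pi^*(K_X+\Delta)+E$ is pseudoeffective); hence the hypothesis in dimension $\le n-1$ gives $\kappa_\iota(F,K_F+\Delta_{X'}|_F)\ge 0$, and since $f$ is the Iitaka fibration of $\pi^*(K_X+\Delta+G)=(K_{X'}+\Delta_{X'})+\pi^*G-E$ one in fact obtains $\kappa_\iota\bigl(F,(K_{X'}+\Delta_{X'})|_F\bigr)=0$.

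It remains to descend to $Z$. As $K_{X'}+\Delta_{X'}$ is pseudoeffective over $Z$ and the relative dimension is $\le n-1$, the assumed results in dimensions at most $n-1$ provide, relatively over $Z$, a good minimal model $(X^\dagger,\Delta^\dagger)$ of $(X',\Delta_{X'})$, with $\kappa_\iota$ unchanged and $K_{X^\dagger}+\Delta^\dagger$ semiample over the induced contraction $f^\dagger\colon X^\dagger\to Z$; on a general fibre of $f^\dagger$ it is semiample with $\kappa_\iota=0$, hence $\R$-linearly trivial, so $K_{X^\dagger}+\Delta^\dagger\sim_\R(f^\dagger)^*(K_Z+B_Z+M_Z)$ by the canonical bundle formula, where $(Z,B_Z)$ is log canonical, $M_Z$ is nef on a birational modification of $Z$, and $K_Z+B_Z+M_Z$ is pseudoeffective. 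Tracing back the $\kappa_\iota$'s, $\kappa_\iota(X,K_X+\Delta)=\kappa_\iota(Z,K_Z+B_Z+M_Z)$, and since $\dim Z=\kappa\le n-1$ the hypothesis in dimensions at most $n-1$, applied to the generalized log canonical pair $(Z,B_Z+M_Z)$, gives $\kappa_\iota(Z,K_Z+B_Z+M_Z)\ge 0$; this completes $(\dagger)$ and the proof. I expect the main obstacle to be precisely this descent step — turning fibrewise nonvanishing into nonvanishing of $K_X+\Delta$ — since it needs both the relative good minimal model over $Z$ and control of the moduli part $M_Z$, i.e.\ nonvanishing for generalized log canonical pairs in dimension $\le n-1$, and one has to justify that this is genuinely supplied by the assumed existence of good minimal models for ordinary log canonical pairs in dimensions at most $n-1$.
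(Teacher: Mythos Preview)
Your treatment of (a) in the case $\kappa_\iota(X,K_X+\Delta+G)=0$ is essentially the paper's Lemma~\ref{lem:jiao}; note only that the uniqueness of the effective representative in an $\R$-linear class uses $\Q$-factoriality (Lemma~\ref{lem:invariantiitaka}(b)), so one should first pass to a resolution, as the paper does at the outset.

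For (b), your Iitaka-fibration setup and the conclusion $\kappa_\iota\big(F,(K_{X'}+\Delta_{X'})|_F\big)=0$ match the paper. The paths then diverge. The paper observes that this forces $K_{X'}+\Delta_{X'}$ to be non-big over $Z$ (with $\dim Z\ge 1$) and applies Proposition~\ref{pro:contraction} directly: its proof runs a relative good model over $Z$ and then invokes \cite[Theorem~1.5]{Has19} on the resulting contraction to a variety of dimension $<n$, yielding a good model of $(X',\Delta_{X'})$ and hence of $(X,\Delta)$ in one stroke. No generalized pairs and no separate effectivity step are needed.

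Your route has two genuine gaps. First, the descent via the canonical bundle formula lands on a \emph{generalized} log canonical pair $(Z,B_Z+M_Z)$, and you then need nonvanishing for such pairs in dimension $\le n-1$; as you yourself flag, the hypothesis only concerns \emph{ordinary} log canonical pairs, and you do not supply the bridge. Second, even granting $(\dagger)$, the ``known reduction'' you invoke to pass from $0\le\kappa_\iota(X,K_X+\Delta)<n$ to the existence of a good model is not available as a black box: for $\kappa_\iota=0$ this is open in general, and for $\kappa_\iota\ge 1$ with $\R$-boundary it is precisely Theorem~\ref{thm:positivekodaira}, which in this paper is \emph{deduced from} Theorem~\ref{thm:effectivethreshold}(b). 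The fix is to replace both the generalized-pair descent and the subsequent ``known reduction'' by a single application of Proposition~\ref{pro:contraction} (equivalently, after your relative good model over $Z$, invoke \cite[Theorem~1.5]{Has19} rather than the canonical bundle formula).
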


Note that in the theorem above I do not assume anything on the singularities of the pair $(X,\Delta+G)$. This result is inspired in part by \cite[Corollary 1.6]{Jia22}, where Jiao shows a version of Theorem \ref{thm:effectivethreshold}(a) under some stricter assumptions in lower dimensions and assuming that $(X,\Delta)$ is a klt pair such that $\Delta$ is a $\Q$-divisor.

\begin{rem}\label{rem:notpsef}
Theorem \ref{thm:effectivethreshold}(a) gives new information only when $K_X$ is pseudoeffective: indeed, when $K_X$ is not pseudoeffective, then for any $\R$-divisor $\Delta$ such that the pair $(X,\Delta)$ is log canonical and $K_X+\Delta$ is pseudoeffective, we have $\kappa_\iota(X,K_X+\Delta)\geq0$; this follows from the proof of \cite[Theorem 1.1]{LM21}.
\end{rem}

\medskip

\noindent{\sc Good models.}
The following result is an immediate corollary of Theorem \ref{thm:effectivethreshold}. It generalises \cite[Theorem 4.4]{Lai11}, where  it was proved for terminal varieties; \cite[Theorem 2.12]{HX13} and \cite[Theorem 1.3]{Hash18a}, where it was proved for klt pairs $(X,\Delta)$ such that $\Delta$ is a $\Q$-divisor; and \cite[Theorem 2.3]{Laz23}, where it was proved for log canonical pairs $(X,\Delta)$ such that $\Delta$ is a $\Q$-divisor.

\begin{thm}\label{thm:positivekodaira}
Assume the existence of good minimal models for projective log canonical pairs in dimension $n-1$.

Let $(X,\Delta)$ be a projective log canonical pair of dimension $n$. If $\kappa_\iota(X,K_X+\Delta)\geq1$, then $(X,\Delta)$ has a good minimal model.
\end{thm}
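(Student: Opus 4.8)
The plan is to deduce this immediately from Theorem~\ref{thm:effectivethreshold}(b), applied to the pair $(X,\Delta)$ with the trivial choice $G:=0$, after first disposing of the case of log general type.

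First I would observe that the hypothesis $\kappa_\iota(X,K_X+\Delta)\geq 1$ already forces $K_X+\Delta$ to be pseudoeffective: by the definition of the invariant Iitaka dimension, $K_X+\Delta$ is then $\R$-linearly equivalent to an effective $\R$-divisor, and such a divisor is pseudoeffective. Next, if $\kappa_\iota(X,K_X+\Delta)=n$, then $K_X+\Delta$ is big, so $(X,\Delta)$ is a log canonical pair of log general type, and in that case the existence of a good minimal model is well known. Hence from now on I may assume $1\leq\kappa_\iota(X,K_X+\Delta)\leq n-1$.

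Granting this, the hypotheses of Theorem~\ref{thm:effectivethreshold} are satisfied by $(X,\Delta)$ together with $G:=0$: indeed $G=0$ is an $\R$-Cartier $\R$-divisor with $G\geq 0$, the divisor $K_X+\Delta$ is pseudoeffective, and $0\leq\kappa_\iota(X,K_X+\Delta+G)=\kappa_\iota(X,K_X+\Delta)\leq n-1<n$. (Here I also use that the existence of good minimal models in dimension $n-1$ yields the same in every dimension at most $n-1$, which is standard, for instance by taking products with abelian varieties, so that the inductive hypothesis required by Theorem~\ref{thm:effectivethreshold} is indeed available.) Since moreover $\kappa_\iota(X,K_X+\Delta+G)=\kappa_\iota(X,K_X+\Delta)\geq 1>0$, part~(b) of Theorem~\ref{thm:effectivethreshold} applies and shows at once that $\delta=0$ and that $(X,\Delta)$ has a good minimal model.

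I do not foresee a genuine obstacle in this argument, since the whole of the content resides in Theorem~\ref{thm:effectivethreshold}. The only point worth keeping in mind is why the case of log general type has to be singled out: Theorem~\ref{thm:effectivethreshold} requires $\kappa_\iota(X,K_X+\Delta+G)<n$, but when $K_X+\Delta$ is big the divisor $K_X+\Delta+G$ is big for every effective $\R$-divisor $G$, so no admissible choice of $G$ exists; in that range, however, the conclusion is classical and does not rely on Theorem~\ref{thm:effectivethreshold} at all.
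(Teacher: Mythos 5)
Your proposal is correct and follows essentially the same route as the paper: the case $\kappa_\iota(X,K_X+\Delta)=n$ is handled by the known existence of good models for log canonical pairs of log general type (the paper cites Hashizume--Hu), and the case $0<\kappa_\iota(X,K_X+\Delta)<n$ is exactly Theorem~\ref{thm:effectivethreshold}(b) with $G=0$. Your extra remark reconciling the ``dimension $n-1$'' hypothesis with the ``dimensions at most $n-1$'' hypothesis of Theorem~\ref{thm:effectivethreshold} is a reasonable standard point that the paper leaves implicit.
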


The following result complements Theorem \ref{thm:effectivethreshold} and is partly inspired by \cite[Theorem 1.4]{MZ23}. There, Meng and Zhuang show that if one knows the existence of a good minimal model for a log canonical pair $(X,\Delta)$, then one can deduce the existence of a good minimal model for a log canonical pair $(X,\Delta')$, assuming that $K_X+\Delta'$ is pseudoeffective, $\Delta'\leq\Delta$ and $\Supp\Delta=\Supp\Delta'$. Their very interesting argument uses previous results of Birkar and Hashizume--Hu to be able to run the MMP when one reduces the coefficients of a boundary. With appropriate assumptions in lower dimensions and by using different methods, one can improve their result considerably.

\begin{thm}\label{thm:main2}
Assume the existence of good minimal models for projective log canonical pairs in dimensions at most $n-1$.

Let $(X,\Delta)$ be a projective $\Q$-factorial log canonical pair of dimension $n$ such that $0\leq\kappa_\iota(X,K_X+\Delta)<n$. Assume that $(X,\Delta)$ has a good minimal model. Then:
\begin{enumerate}[\normalfont (a)]
\item if $\Delta'$ is an $\R$-divisor on $X$ such that the pair $(X,\Delta')$ is log canonical and $K_X+\Delta'$ is pseudoeffective, then $\kappa_\iota(X,K_X+\Delta')\geq0$,
\item if $\Delta'$ is an $\R$-divisor on $X$ such that the pair $(X,\Delta')$ is log canonical, $K_X+\Delta'$ is pseudoeffective and $\Delta'\leq\Delta$, then $(X,\Delta')$ has a good minimal model.
\end{enumerate}
If $K_X$ is additionally pseudoeffective, then:
\begin{enumerate}[\normalfont (i)]
\item if $\Delta'$ is an $\R$-divisor on $X$ such that the pair $(X,\Delta')$ is log canonical and $\Supp\Delta'\subseteq\Supp\Delta$, then $(X,\Delta')$ has a good minimal model,
\item if $\kappa_\iota(X,K_X+\Delta)>0$ and if $\Delta''$ is an $\R$-divisor on $X$ such that the pair $(X,\Delta'')$ is log canonical and $\Supp\Delta\subseteq\Supp\Delta''$, then $(X,\Delta'')$ has a good minimal model.
\end{enumerate}
\end{thm}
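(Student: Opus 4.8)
\emph{Overall approach.} The main engine is Theorem~\ref{thm:effectivethreshold}, supplemented by Theorem~\ref{thm:positivekodaira} and by a few standard inputs: a log canonical pair admitting a good minimal model has abundant log canonical divisor, so that $\kappa_\iota=\kappa_\sigma$ for it; $\kappa_\sigma(X,D)\le\kappa_\sigma(X,D+E)$ for every effective $\R$-divisor $E$, and a pseudoeffective $\R$-divisor of invariant Iitaka dimension $0$ is $\R$-linearly equivalent to a unique effective $\R$-divisor; and a log canonical pair $(X,\Gamma)$ with $K_X+\Gamma$ pseudoeffective, $\kappa_\iota(X,K_X+\Gamma)\ge0$ and $\kappa_\sigma(X,K_X+\Gamma)=0$ has a good minimal model --- one runs a $(K_X+\Gamma)$-MMP with scaling of an ample divisor, which terminates because $K_X+\Gamma$ coincides with its own negative part, and on the output $K_X+\Gamma$ is nef of numerical dimension $0$, hence numerically trivial, hence $\sim_\R0$ by the nonvanishing. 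Since $X$ is $\Q$-factorial, Theorem~\ref{thm:effectivethreshold} may be applied freely with $G$ equal to a difference of boundaries whenever that difference is effective, with no constraint on the singularities of the resulting auxiliary pair. The remark used repeatedly is that if $\Delta_1,\Delta_2$ are boundaries and $\kappa_\iota(X,K_X+\Delta_1)\ge0$, then $\kappa_\iota(X,K_X+\Delta_1+E)\ge0$ for any effective $E$, so Theorem~\ref{thm:effectivethreshold} applies to $(X,\Delta_2)$ with $G:=\Delta_1+E-\Delta_2$ as soon as this is effective.

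\emph{Part (b).} Here $\Delta'\le\Delta$, so with $G:=\Delta-\Delta'\ge0$ we have $K_X+\Delta'+G=K_X+\Delta$, which has a good minimal model and satisfies $0\le\kappa_\iota(X,K_X+\Delta'+G)<n$; thus Theorem~\ref{thm:effectivethreshold} applies to $(X,\Delta')$ and $G$. If $\kappa_\iota(X,K_X+\Delta)>0$, Theorem~\ref{thm:effectivethreshold}(b) immediately produces a good minimal model for $(X,\Delta')$. If $\kappa_\iota(X,K_X+\Delta)=0$, then $K_X+\Delta$ is abundant, hence $\kappa_\sigma(X,K_X+\Delta')\le\kappa_\sigma(X,K_X+\Delta)=0$; by part~(a) we have $\kappa_\iota(X,K_X+\Delta')\ge0$, and then $(X,\Delta')$ has a good minimal model by the last fact recalled above.

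\emph{Part (a).} We may assume $K_X+\Delta'$ is not big. Set $\Theta:=\max\{\Delta,\Delta'\}$ and $G:=(\Delta-\Delta')_{+}=\Theta-\Delta'\ge0$, so that $K_X+\Delta'+G=K_X+\Theta\ge K_X+\Delta$ and hence $\kappa_\iota(X,K_X+\Theta)\ge\kappa_\iota(X,K_X+\Delta)\ge0$. If $0<\kappa_\iota(X,K_X+\Theta)<n$, then Theorem~\ref{thm:effectivethreshold} applied to $(X,\Delta')$ and $G$, in the form of part~(b), forces $\delta=0$, i.e.\ $\kappa_\iota(X,K_X+\Delta')\ge0$ (in fact $(X,\Delta')$ even has a good minimal model). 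There remain the cases $\kappa_\iota(X,K_X+\Theta)\in\{0,n\}$. If $\kappa_\iota(X,K_X+\Theta)=0$, then also $\kappa_\iota(X,K_X+\Delta)=0$, so $K_X+\Delta\sim_\R N$ for its unique effective representative $N$, which is contracted on a good minimal model of $(X,\Delta)$; tracking $N$ through the unique effective representatives of $K_X+\Theta$ and $K_X+\Delta'$ yields the desired nonvanishing. If $\kappa_\iota(X,K_X+\Theta)=n$, I would exploit that $s\mapsto K_X+\Delta+s(\Delta'-\Delta)_{+}$ interpolates between $K_X+\Delta$ and $K_X+\Theta$ and so crosses from non-big to big at some threshold, apply Theorem~\ref{thm:effectivethreshold} just below that threshold, and use the convexity observation that $\{s\in[0,1] : \kappa_\iota(X,K_X+(1-s)\Delta+s\Delta')\ge0\}$ is an interval containing $0$ (its log canonical divisors are convex combinations of two divisors that are $\R$-linearly equivalent to effective ones), together with Theorem~\ref{thm:positivekodaira} and openness of the existence of good minimal models, to propagate nonvanishing up to $s=1$.

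\emph{Parts (i), (ii), and the main obstacle.} Now $K_X$ is pseudoeffective, so $K_X+\Delta'$ and $K_X+\Delta''$ are automatically pseudoeffective. For (ii) it suffices by Theorem~\ref{thm:positivekodaira} to show $\kappa_\iota(X,K_X+\Delta'')\ge1$. By part~(b), $(X,\varepsilon\Delta)$ has a good minimal model $\psi:X\dashrightarrow Y$ for every $\varepsilon\in(0,1]$; let $g:Y\to W$ be its Iitaka fibration. Then $K_Y=\psi_*K_X$ is pseudoeffective, and restricting $K_Y+\varepsilon\Delta_Y=g^*(\text{ample})$ to a general fibre $F$ gives $K_F\sim_\R-\varepsilon\Delta_Y|_F\le0$ with $K_F=K_Y|_F$ pseudoeffective, forcing $\Delta_Y|_F=0$; so $\Delta_Y$ is vertical over $W$, which forces $\dim W=\kappa_\iota(X,K_X+\varepsilon\Delta)=\kappa_\iota(X,K_X+\Delta)\ge1$. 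Since $\Supp\Delta\subseteq\Supp\Delta''$ gives $\varepsilon\Delta\le\Delta''$ for small $\varepsilon$, we conclude $\kappa_\iota(X,K_X+\Delta'')\ge\kappa_\iota(X,K_X+\varepsilon\Delta)\ge1$. For (i): since $\Supp\Delta'\subseteq\Supp\Delta$ we have $\varepsilon\Delta'\le\Delta$ for small $\varepsilon$, and $(X,\varepsilon\Delta')$ is log canonical because lowering the boundary from $\Delta$ only raises discrepancies, so by (b) the pair $(X,\varepsilon\Delta')$ has a good minimal model with $0\le\kappa_\iota(X,K_X+\varepsilon\Delta')<n$; if this Iitaka dimension is positive, applying (ii) with $(X,\varepsilon\Delta')$ as the anchor raises the coefficients back to $\Delta'$, while if it is $0$ the general-fibre argument above shows $\Delta'$ is contracted on a good minimal model of $(X,\varepsilon\Delta')$, which is then checked to be a good minimal model of $(X,\Delta')$ as well. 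The main obstacle throughout is the case of vanishing Iitaka dimension in part~(a): there Theorem~\ref{thm:effectivethreshold}(b) gives nothing, and one must combine abundance of $K_X+\Delta$, uniqueness of effective representatives in Iitaka dimension $0$, super-additivity of $N_\sigma$, and the geometry of a good minimal model to control the supports of the relevant effective representatives in terms of $\Supp\Delta$ and $\Supp\Delta'$; every other case reduces to this, to Theorem~\ref{thm:effectivethreshold}, or to Theorem~\ref{thm:positivekodaira}.
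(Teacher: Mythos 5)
Your part (b) in the case $\kappa_\iota(X,K_X+\Delta)>0$ coincides with the paper's proof (Theorem~\ref{thm:effectivethreshold}(b) with $G=\Delta-\Delta'$), and your parts (i) and (ii) take a genuinely different, more geometric route (Iitaka fibrations of good minimal models of $(X,\varepsilon\Delta)$ and restriction to general fibres) than the paper, which instead writes $K_X\sim_\Q D\geq 0$ and compares supports of $D+\Delta'$, $D+t\Delta'$, $D+\Delta''$ via Remark~\ref{rem:dimensions}(b) and \cite[Lemma 4.1]{LM21} resp.\ Theorem~\ref{thm:positivekodaira}. That alternative route for (i)/(ii) is plausible, though you leave several steps unchecked (e.g.\ why $\dim W=\kappa_\iota(X,K_X+\Delta)$ once $\Delta_Y$ is vertical, and the negativity condition for the model of $(X,\Delta')$ in the $\kappa_\iota=0$ branch of (i)).

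The genuine gap is part (a), and you acknowledge it yourself. Your direct attack via $\Theta=\max\{\Delta,\Delta'\}$ only closes the case $0<\kappa_\iota(X,K_X+\Theta)<n$; in the case $\kappa_\iota(X,K_X+\Theta)=0$ the ``tracking'' of effective representatives amounts to claiming $N\geq(\Delta-\Delta')_+$ for the unique effective representative $N$ of $K_X+\Delta$, which does not follow from anything you have established (Lemma~\ref{lem:jiao} needs a whole sequence $\varepsilon_k\to 0$ with $\kappa_\iota\geq 0$, which is exactly what is unknown here), and in the case $\kappa_\iota(X,K_X+\Theta)=n$ your plan rests on an unproved ``openness of the existence of good minimal models.'' The observation you are missing is that (a) is an immediate consequence of (b) plus one known input: if $K_X$ is pseudoeffective, then $(X,0)$ is log canonical (as $X$ is $\Q$-factorial and $0\leq\Delta$), so (b) applied with $\Delta'=0$ gives $\kappa(X,K_X)\geq 0$, hence $K_X\sim_\Q D\geq0$ and $K_X+\Delta'\sim_\R D+\Delta'\geq0$ for every boundary $\Delta'$; if $K_X$ is \emph{not} pseudoeffective, nonvanishing for every log canonical pair on $X$ with pseudoeffective log canonical divisor is already known (Remark~\ref{rem:notpsef}, from the proof of \cite[Theorem 1.1]{LM21}). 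This matters structurally: your (b) in the case $\kappa_\iota(X,K_X+\Delta)=0$ invokes (a) to get nonvanishing before concluding, so your (b) --- and hence your (i) and (ii), which sit on top of it --- inherits the gap. The fix is to not use (a) there at all: since $\Delta'\leq\Delta$ and $(X,\Delta)$ has a good model with $\kappa_\iota=0$, one has $\nu(X,K_X+\Delta')\leq\nu(X,K_X+\Delta)=0$, and Theorem~\ref{thm:nu=0} already contains the nonvanishing together with the existence of a good model for $\nu=0$. With (b) proved this way, (a) follows as above and the circularity disappears.
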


As an easy corollary, we obtain:

\begin{cor}\label{cor:onetoall}
Assume the existence of good minimal models for projective log canonical pairs in dimensions at most $n-1$ and the semiampleness part of the Abundance conjecture in dimension $n$.

Let $(X,\Delta)$ be a $\Q$-factorial log canonical pair of dimension $n$ such that $0\leq\kappa_\iota(X,K_X+\Delta)<n$. Then for any $\R$-divisor $\Delta'$ on $X$, if the pair $(X,\Delta')$ is log canonical and if $K_X+\Delta'$ is pseudoeffective, then $(X,\Delta')$ has a good minimal model.
\end{cor}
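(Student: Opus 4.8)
The plan is to reduce the assertion to Theorem~\ref{thm:main2}(a) — the nonvanishing statement — combined with the assumed semiampleness in dimension $n$; the substance of the corollary has already been extracted in Theorem~\ref{thm:main2}, so what remains is an assembly. The one preliminary fact I need is the following observation: under the hypotheses of the corollary, \emph{every} projective $\Q$-factorial log canonical pair $(X,D)$ of dimension $n$ with $\kappa_\iota(X,K_X+D)\geq0$ has a good minimal model. Indeed, $\kappa_\iota(X,K_X+D)\geq0$ means precisely that $K_X+D$ is $\R$-linearly equivalent to an effective $\R$-divisor; by the standard results on the existence of minimal models, which follow from the assumed existence of good minimal models in dimensions at most $n-1$ (running an appropriate minimal model program, after passing to a $\Q$-factorial dlt modification), the pair $(X,D)$ then has a minimal model $(Y,D_Y)$, so that $K_Y+D_Y$ is nef; finally, the semiampleness part of the Abundance conjecture in dimension $n$ shows that $K_Y+D_Y$ is semiample, and hence $(Y,D_Y)$ is in fact a good minimal model of $(X,D)$.

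Granting this, I would first apply the observation to the pair $(X,\Delta)$ itself, which is legitimate since $\kappa_\iota(X,K_X+\Delta)\geq0$, to conclude that $(X,\Delta)$ has a good minimal model. As $(X,\Delta)$ is moreover $\Q$-factorial log canonical of dimension $n$ with $0\leq\kappa_\iota(X,K_X+\Delta)<n$, all the hypotheses of Theorem~\ref{thm:main2} are met, and part~(a) of that theorem yields, for the given divisor $\Delta'$ with $(X,\Delta')$ log canonical and $K_X+\Delta'$ pseudoeffective, that $\kappa_\iota(X,K_X+\Delta')\geq0$. Applying the observation a second time, now to $(X,\Delta')$, we obtain a good minimal model of $(X,\Delta')$, as desired. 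Note that only part~(a) of Theorem~\ref{thm:main2} is used: the additional hypotheses appearing in its other parts ($K_X$ pseudoeffective, $\Delta'\leq\Delta$, or a containment of supports) play no role here, because the semiampleness assumption allows one to pass directly from nonvanishing to the existence of a good minimal model, with no comparison between $\Delta$ and $\Delta'$ required.

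The only point that genuinely needs care is the observation, and there the real content is the existence of a minimal model — not yet known to be good — for a log canonical pair of dimension $n$ whose log canonical divisor is $\R$-linearly equivalent to an effective divisor, granted good minimal models in dimension at most $n-1$; this is where $\Q$-factoriality enters, and it is a standard consequence of the inductive assumption. If one prefers to keep external inputs to a minimum, this step may instead be split according to whether $\kappa_\iota(X,K_X+D)\geq1$, in which case Theorem~\ref{thm:positivekodaira} already produces a good minimal model without invoking the semiampleness assumption at all, or $\kappa_\iota(X,K_X+D)=0$, which is then the only remaining case and is handled as above. The passage from a minimal model to a good one is in any event immediate from the dimension-$n$ semiampleness hypothesis.
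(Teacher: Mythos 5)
Your proposal is correct and follows essentially the same route as the paper: establish a good minimal model for $(X,\Delta)$ first (minimal model from nonvanishing plus the lower-dimensional hypothesis, then the assumed semiampleness), apply Theorem~\ref{thm:main2}(a) to get $\kappa_\iota(X,K_X+\Delta')\geq0$, and repeat the same argument for $(X,\Delta')$. The only difference is that what you call a ``standard consequence of the inductive assumption'' is exactly the nontrivial existence-of-minimal-models result the paper pins down by citing \cite[Theorem B]{LT22}, so in a written version you should cite such a result explicitly rather than treat it as routine.
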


\section{Preliminaries}\label{sec:prelim}

I work over $\C$. Unless explicitly stated otherwise, all varieties in the paper are normal and projective. A \emph{fibration} is a projective surjective morphism with connected fibres. A \emph{birational contraction} is a birational map whose inverse does not contract any divisors. 

The standard reference for the definitions and basic results on the singularities of pairs and the Minimal Model Program is \cite{KM98}. A \emph{pair} $(X,\Delta)$ in this paper always has a boundary $\Delta$ which is an effective $\R$-divisor.

We will need the notion of \emph{dlt blowups} of a log canonical pair, see \cite[Theorem 10.5]{Fuj17}.

\subsection*{Good models}

Let $X$ and $Y$ be normal varieties, and let $D$ be an $\R$-Cartier $\R$-divisor on $X$. A birational contraction $f\colon X\dashrightarrow Y$ is a \emph{good minimal model for $D$}, or simply a \emph{good model for $D$}, if $f_*D$ is $\R$-Cartier and semiample, and if there exists a resolution of indeterminacies $(p,q)\colon W\to X\times Y$ of the map $f$ such that $p^*D=q^*f_*D+E$, where $E\geq0$ is a $q$-exceptional $\R$-divisor which contains the whole $q$-exceptional locus in its support.

This definition is the standard definition. A weaker notion is that of \emph{good models in the sense of Birkar--Hashizume}, see \cite[Section 2]{LM21} for details; it is used only occasionally in this paper. However, I need the following lemma: it says that the existence of good models and the existence of good models in the sense of Birkar--Hashizume are equivalent problems. Lemma \ref{lem:goodmodelsequivalence} is useful since many results in the literature show the existence of good models in the sense of Birkar--Hashizume.

\begin{lem}\label{lem:goodmodelsequivalence}
Let $(X,\Delta)$ be a projective log canonical pair such that $K_X+\Delta$ is pseudoeffective. If there exists a good model in the sense of Birkar--Hashizume of $(X,\Delta)$, then there exists a good model of $(X,\Delta)$.
\end{lem}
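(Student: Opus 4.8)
The plan is to reduce to a $\mathbb{Q}$-factorial dlt pair and then run the Minimal Model Program with scaling of an ample divisor, using the Birkar--Hashizume good model only to guarantee termination and to identify the output.

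\emph{Step 1: dlt blowup.} Let $\pi\colon(\tilde X,\tilde\Delta)\to(X,\Delta)$ be a $\mathbb{Q}$-factorial dlt blowup, so that $K_{\tilde X}+\tilde\Delta=\pi^*(K_X+\Delta)$ and $\tilde\Delta=\pi_*^{-1}\Delta+\mathrm{Exc}(\pi)_{\mathrm{red}}$. If $\phi\colon X\dashrightarrow Y$ is a good model of $(X,\Delta)$ in the sense of Birkar--Hashizume, then $\phi\circ\pi\colon\tilde X\dashrightarrow Y$ is one of $(\tilde X,\tilde\Delta)$: it is again a birational contraction, it contracts every $\pi$-exceptional divisor (these are already contracted by $\pi$), $K_Y+(\phi\circ\pi)_*\tilde\Delta=\phi_*(K_X+\Delta)$ is semiample, and discrepancies over $(\tilde X,\tilde\Delta)$ coincide with those over $(X,\Delta)$ by crepancy.

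\emph{Step 2: MMP with scaling.} Since $(\tilde X,\tilde\Delta)$ is $\mathbb{Q}$-factorial dlt and admits a good model in the sense of Birkar--Hashizume, the $(K_{\tilde X}+\tilde\Delta)$-MMP with scaling of a general ample divisor terminates with a good minimal model $\rho\colon\tilde X\dashrightarrow Y^\dagger$; this is the unconditional termination statement in the presence of a minimal model, due to Birkar and Hashizume. Being the outcome of an MMP, $\rho$ is a $(K_{\tilde X}+\tilde\Delta)$-negative birational contraction onto a $\mathbb{Q}$-factorial pair, and $K_{Y^\dagger}+\rho_*\tilde\Delta$ is semiample. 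Moreover any two minimal models of $(\tilde X,\tilde\Delta)$ are isomorphic in codimension one, so $\rho$ contracts exactly the divisors contracted by $\phi\circ\pi$; in particular $\rho$ contracts the whole $\pi$-exceptional locus.

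\emph{Step 3: descent.} Put $f:=\rho\circ\pi^{-1}\colon X\dashrightarrow Y^\dagger$. Since $\rho^{-1}$ contracts no divisor and no divisor of $Y^\dagger$ dominates a $\pi$-exceptional divisor of $\tilde X$ (all of those being $\rho$-contracted), $f^{-1}$ contracts no divisor, so $f$ is a birational contraction. Using that $\rho$ contracts the full $\pi$-exceptional locus, $f_*(K_X+\Delta)=\rho_*\pi_*^{-1}(K_X+\Delta)=\rho_*(K_{\tilde X}+\tilde\Delta)=K_{Y^\dagger}+\rho_*\tilde\Delta$, which is semiample. For the negativity condition, pick a common resolution $(p,q)\colon W\to X\times Y^\dagger$ of $f$ that also dominates $\tilde X$, chosen so that it extracts only divisors whose centre meets the exceptional locus of $\rho$; pulling $K_{\tilde X}+\tilde\Delta=\pi^*(K_X+\Delta)$ back to $W$ and invoking the $(K_{\tilde X}+\tilde\Delta)$-negativity of $\rho$ shows that the effective divisor $E$ in $p^*(K_X+\Delta)=q^*f_*(K_X+\Delta)+E$ contains the whole $q$-exceptional locus. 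Hence $f$ is a good model of $(X,\Delta)$ in the standard sense.

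The main obstacle is Step 3: one must make sure that the strict (``negative'') discrepancy inequalities coming out of the MMP survive the contraction along $\pi$, which rests on the two facts that an MMP is automatically $(K+\Delta)$-negative and that, by the codimension-one uniqueness of minimal models, the exceptional locus of the MMP output does not depend on the chosen dlt blowup. The one nontrivial external input is the unconditional termination of the MMP with scaling for a $\mathbb{Q}$-factorial dlt pair that has a Birkar--Hashizume good model, which is precisely what makes the lemma hold in all dimensions.
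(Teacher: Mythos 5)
There is a genuine gap in Steps 2--3, namely in the claim that the MMP output $\rho\colon\tilde X\dashrightarrow Y^\dagger$ contracts the whole $\pi$-exceptional locus. You deduce this from the codimension-one uniqueness of minimal models, applied to $\rho$ and to $\phi\circ\pi$. But $\phi\circ\pi$ is only a good model in the sense of Birkar--Hashizume, i.e.\ a weak log canonical model: the $\pi$-exceptional divisors are log canonical places of $(X,\Delta)$, the dlt blowup is crepant, and so the \emph{strict} discrepancy inequality required of a minimal model in the standard sense fails for exactly these divisors. The uniqueness statement therefore does not apply, and the conclusion is false in general: take any log canonical, non-dlt pair $(X,\Delta)$ with $K_X+\Delta$ already semiample. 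Then $K_{\tilde X}+\tilde\Delta=\pi^*(K_X+\Delta)$ is nef, the MMP with scaling is the identity, $\rho$ contracts nothing, and $f=\rho\circ\pi^{-1}=\pi^{-1}$ is not a birational contraction (its inverse is $\pi$). More generally, whenever a $\pi$-exceptional divisor survives on $Y^\dagger$, the map $f$ fails to be a birational contraction, so Step 3 collapses. This is not a removable technicality of your write-up: the set of divisors contracted by a genuine (negative) minimal model is canonically determined, whereas a Birkar--Hashizume model may contract strictly more divisors, and the dlt blowup manufactures precisely such extra contracted divisors.

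A second, smaller gap is the assertion in Step 2 that the MMP terminates with a \emph{good} minimal model: the termination theorem of Hashizume--Hu only produces a nef output, and semiampleness must be transferred from the Birkar--Hashizume model, e.g.\ by comparing the two weak log canonical models on a common resolution (where the pullbacks of the two log canonical divisors coincide by the negativity lemma). The paper sidesteps both problems by not introducing a dlt blowup at all: \cite[Theorem 1.7]{HH20} allows one to run the MMP with scaling directly on the log canonical pair $(X,\Delta)$, so the resulting birational contraction automatically satisfies the strict negativity required by the standard definition, and the only remaining point --- semiampleness of the output --- is handled by \cite[Lemma 2.15]{Has19} together with the argument of \cite[Lemma 4.1]{LM21}. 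If you want to keep your structure, you must replace Step 3 by an argument that descends semiampleness from $(\tilde X,\tilde\Delta)$ to a minimal model of $(X,\Delta)$ obtained by an MMP on $X$ itself, rather than trying to push the model of $\tilde X$ down through $\pi$.
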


\begin{proof}
By \cite[Theorem 1.7]{HH20}, we may run a $(K_X+\Delta)$-MMP with scaling of an ample divisor which terminates with a minimal model $(Y,\Delta_Y)$. Let $(p,q)\colon W\to X\times Y$ be a resolution of indeterminacies of the map $X\dashrightarrow Y$ which is a log resolution of $(X,\Delta)$. By applying \cite[Lemma 2.15]{Has19} to the maps $p$ and $q$, we deduce that the pair $(Y,\Delta_Y)$ has a good model in the sense of Birkar--Hashizume.\footnote{Note that good models in the sense of Birkar--Hashizume are referred to as good minimal models in \cite{Has19}.} But now we conclude that $K_Y+\Delta_Y$ is semiample by the same argument as in the third paragraph of the proof of \cite[Lemma 4.1]{LM21}, hence $(Y,\Delta_Y)$ is a good model of $(X,\Delta)$.
\end{proof}

The following result is an immediate corollary of \cite[Lemma 2.15]{Has19} and of Lemma \ref{lem:goodmodelsequivalence}.

\begin{cor}\label{cor:upanddown}
Let $(X,\Delta)$ be a projective log canonical pair and let $f\colon Y\to X$ be a projective birational morphism such that there exist effective $\R$-Cartier $\R$-divisors $\Gamma$ and $E$ on $Y$, where $E$ is $f$-exceptional, with the property that the pair $(Y,\Gamma)$ is log canonical and 
$$K_Y+\Gamma\sim_\R f^*(K_X+\Delta)+E.$$
Then $(X,\Delta)$ has a good model if and only if $(Y,\Gamma)$ has a good model.
\end{cor}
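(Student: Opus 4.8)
The plan is to reduce the statement to the corresponding equivalence for good models in the sense of Birkar--Hashizume, for which one can invoke \cite[Lemma 2.15]{Has19} directly.

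First I would dispose of the case in which $K_X+\Delta$ is not pseudoeffective. Pushing the relation $K_Y+\Gamma\sim_\R f^*(K_X+\Delta)+E$ forward along $f$ and using that $E$ is $f$-exceptional gives $f_*(K_Y+\Gamma)\sim_\R K_X+\Delta$; since the pushforward of a pseudoeffective class along a morphism is pseudoeffective, $K_Y+\Gamma$ is then not pseudoeffective either. In this situation neither $(X,\Delta)$ nor $(Y,\Gamma)$ admits a good model, because a semiample $\R$-divisor is in particular pseudoeffective, so the asserted equivalence holds vacuously. Hence from now on I may assume that $K_X+\Delta$ is pseudoeffective; pulling the relation back and adding the effective divisor $E$ shows that $K_Y+\Gamma$ is pseudoeffective as well.

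Next I would record that, for a projective log canonical pair whose log canonical divisor is pseudoeffective, the existence of a good model is equivalent to the existence of a good model in the sense of Birkar--Hashizume: one implication holds because every good model is in particular a good model in the sense of Birkar--Hashizume, and the converse is exactly Lemma \ref{lem:goodmodelsequivalence}. Applying this to both $(X,\Delta)$ and $(Y,\Gamma)$, it suffices to prove that $(X,\Delta)$ has a good model in the sense of Birkar--Hashizume if and only if $(Y,\Gamma)$ does. But the data of the corollary --- a projective birational morphism $f\colon Y\to X$, log canonical pairs $(Y,\Gamma)$ and $(X,\Delta)$, and an effective $f$-exceptional $\R$-Cartier $\R$-divisor $E$ with $K_Y+\Gamma\sim_\R f^*(K_X+\Delta)+E$ --- is precisely the setting of \cite[Lemma 2.15]{Has19}, which transfers the existence of good models in the sense of Birkar--Hashizume along $f$; if the cited lemma is used only in the ``up'' direction, one obtains the converse direction by first passing to a common log resolution of both pairs and arguing as in the proof of Lemma \ref{lem:goodmodelsequivalence}. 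Combined with the previous paragraph, this yields the corollary.

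The argument is essentially formal once the cited results are in place; the only points that require care are the non-pseudoeffective case, ensuring that $K_Y+\Gamma$ is pseudoeffective whenever $K_X+\Delta$ is (so that Lemma \ref{lem:goodmodelsequivalence} applies on the $Y$-side as well), and checking that the hypotheses of \cite[Lemma 2.15]{Has19} are met in whichever direction is needed.
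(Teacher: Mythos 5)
Your proposal is correct and follows essentially the same route as the paper, which likewise deduces the corollary immediately from \cite[Lemma 2.15]{Has19} combined with Lemma \ref{lem:goodmodelsequivalence}. The extra care you take with the non-pseudoeffective case and with transferring pseudoeffectivity along $f$ is sound but not part of the paper's (implicit) argument.
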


\subsection*{Numerical dimension}

In this paper, if $X$ is a normal projective variety and $D$ is a pseudoeffective $\R$-Cartier $\R$-divisor on $X$, then $ \nu(X,D) $ denotes the \emph{numerical dimension} of $ D $, see \cite[Chapter V]{Nak04}, \cite{Kaw85}; this was denoted by $\kappa_\sigma$ in \cite{Nak04}.

I include the following important result for the lack of explicit reference.

\begin{thm}\label{thm:nu=0}
Let $(X,\Delta)$ be a projective log canonical pair such that\linebreak $\nu(X,K_X+\Delta)=0$. Then there exists a $(K_X+\Delta)$-MMP with scaling of an ample divisor which terminates with a good model of $(X,\Delta)$.
\end{thm}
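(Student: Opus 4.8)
The plan is to run a $(K_X+\Delta)$-MMP with scaling of an ample divisor — interpreted in the sense of the log canonical MMP, so that all flips and divisorial contractions exist by \cite{Fuj17} — and to exploit how restrictive the hypothesis $\nu(X,K_X+\Delta)=0$ is. Two elementary observations underlie everything. First, the numerical dimension is invariant along the MMP: each step $(X_i,\Delta_i)\dashrightarrow(X_{i+1},\Delta_{i+1})$ is crepant up to an effective exceptional $\R$-divisor, and $\kappa_\sigma$ does not change under such birational maps \cite[Chapter V]{Nak04}; hence every $K_{X_i}+\Delta_i$ remains pseudoeffective with numerical dimension $0$, and in particular no step can be a Mori fibre space. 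Second, a nef $\R$-divisor whose numerical dimension vanishes is numerically trivial \cite{Kaw85}.

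The crucial input is that a pair with vanishing numerical dimension admits a good minimal model in the sense of Birkar--Hashizume. For klt pairs this is essentially Nakayama's analysis of pseudoeffective divisors with $\kappa_\sigma=0$ \cite[Chapter V]{Nak04}; for an arbitrary log canonical pair one reduces to that case by passing to a dlt blowup \cite[Theorem 10.5]{Fuj17}, combined with the abundance theorem for numerically trivial log canonical divisors \cite{Gon11} and routine MMP arguments. Granting this, \cite[Theorem 1.7]{HH20} — applied exactly as in the proof of Lemma \ref{lem:goodmodelsequivalence} — guarantees that any $(K_X+\Delta)$-MMP with scaling of an ample divisor terminates with a minimal model $(Y,\Delta_Y)$ of $(X,\Delta)$.

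It remains to upgrade this minimal model to a good one. By the invariance of the numerical dimension noted above, $\nu(Y,K_Y+\Delta_Y)=\nu(X,K_X+\Delta)=0$; since $K_Y+\Delta_Y$ is nef, \cite{Kaw85} forces $K_Y+\Delta_Y\equiv 0$, and then the numerically trivial abundance theorem \cite{Gon11} gives $K_Y+\Delta_Y\sim_\R 0$. In particular $K_Y+\Delta_Y$ is semiample, and the negativity condition in the definition of a good model is automatic because $(Y,\Delta_Y)$ is the output of the MMP; so $(Y,\Delta_Y)$ is a good model of $(X,\Delta)$, as required.

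The one genuinely hard step is producing the (good) minimal model in the second paragraph — equivalently, obtaining termination — since everything else is formal: this rests on Nakayama's work on $\kappa_\sigma=0$, on numerically trivial abundance in the log canonical setting, and on the MMP-with-scaling technology behind \cite[Theorem 1.7]{HH20}. I would also take care of the points glossed over above, namely the invariance of $\nu$ under the precise flips and divisorial contractions that occur, and, where a $\Q$-factoriality or dlt hypothesis is needed to invoke a cited result, the passage to and descent from a dlt blowup.
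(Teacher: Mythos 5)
Your proposal is correct and follows essentially the same route as the paper: reduce the existence of a (Birkar--Hashizume) good model to the dlt case via a dlt blowup and Gongyo's theorem \cite{Gon11} (with \cite{Dru11,Nak04} behind the klt case), then invoke \cite[Theorem 1.7]{HH20} to get termination of any $(K_X+\Delta)$-MMP with scaling, and finally identify the output as a good model. The only cosmetic difference is that you re-derive the last step by hand (nef plus $\nu=0$ implies numerical triviality by \cite{Kaw85}, then $\R$-linear triviality by Gongyo's numerically trivial abundance), whereas the paper descends the good model through Corollary \ref{cor:upanddown} and cites \cite[Lemma 6.1]{Gon11}; the content is the same.
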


\begin{proof}
If $(X,\Delta)$ is a klt pair, this was shown in \cite{Dru11}, and if $(X,\Delta)$ is a dlt pair, the result was proved in \cite{Gon11}. In the generality given here, it is a special case of \cite[Corollary 5.7]{Has23}, but here I give a short alternative proof.

Let $\pi\colon (X',\Delta')\to (X,\Delta)$ be a dlt blowup of $(X,\Delta)$. Then $\nu(X',K_{X'}+\Delta')=0$ by Remark \ref{rem:dimensions}(a) below, hence $(X',\Delta')$ has a good model by \cite[Theorems 5.1 and 6.1]{Gon11}. Thus, $(X,\Delta)$ has a good model by Corollary \ref{cor:upanddown}. Finally, the statement on the existence of the MMP with scaling follows from \cite[Theorem 1.7]{HH20} and \cite[Lemma 6.1]{Gon11}.
\end{proof}

\subsection*{Invariant Iitaka dimension}

I use the \emph{invariant Iitaka dimension} of a pseudoeffective $\R$-Cartier $\R$-divisor $D$ on a normal projective variety $X$, denoted by $\kappa_\iota(X,D)$ and introduced in \cite{Cho08}; if the divisor $D$ has rational coefficients or if $D\geq0$, its Iitaka dimension is denoted by $\kappa(X,D)$. The invariant Iitaka dimension shares many good properties with the Iitaka dimension of $\Q$-divisors \cite[Section 2.5]{Fuj17}, and this fact will be used throughout without explicit mention. 

\begin{rem}\label{rem:dimensions}
We frequently need the following properties.
\begin{enumerate}[\normalfont (a)]
\item If $D$ is an $\R$-Cartier $\R$-divisor on a normal projective variety $X$, if $f\colon Y\to X$ is a birational morphism from a normal projective variety $Y$, and if $E$ is an effective $f$-exceptional $\R$-Cartier $\R$-divisor on $Y$, then
$$\kappa_\iota(X,D)=\kappa_\iota(Y,f^*D+E)\quad\text{and}\quad\nu(X,D)=\nu(Y,f^*D+E);$$
see for instance \cite[\S2.2]{LP20a} for references and discussion.
\item If $D_1$ and $D_2$ are effective $\R$-Cartier $\R$-divisors on a normal projective variety $X$ such that $\Supp D_1=\Supp D_2$, then $\kappa_\iota(X,D_1)=\kappa_\iota(X,D_2)$ and $\nu(X,D_1)=\nu(X,D_2)$; the proof is easy and the same as that of \cite[Lemma 2.9]{DL15}. Moreover, the same proof shows that if $\Supp D_1\subseteq\Supp D_2$, then $\kappa_\iota(X,D_1)\leq\kappa_\iota(X,D_2)$ and $\nu(X,D_1)\leq\nu(X,D_2)$.
\end{enumerate}
\end{rem}

The following result \cite[Lemma 2.3]{LP18} is crucial for this paper.

\begin{lem}\label{lem:iitaka}
Let $X$ be a normal projective variety and let $L$ be a $\Q$-Cartier $\Q$-divisor on $X$ with $\kappa(X,L)\geq0$. Then for any sufficiently high resolution $\pi\colon Y\to X$ there exists a fibration $f\colon Y\to Z$:
\[
\xymatrix{ 
Y \ar[d]^{\pi} \ar[r]^{f} & Z \\
X  & 
}
\]
such that $\dim Z=\kappa(X,L)$, and for every $\pi$-exceptional $\Q$-divisor $E\geq0$ on $Y$ and for a very general fibre $F$ of $f$ we have
$$\kappa\big(F,(\pi^*L+E)|_F\big)=0.$$
\end{lem}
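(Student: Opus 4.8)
The plan is to reduce everything to the classical Iitaka fibration theorem, the only genuinely new content being the \emph{uniformity} of the conclusion over all $\pi$-exceptional divisors $E$. Since $\kappa(X,L)\ge 0$, I fix $m_0$ sufficiently divisible so that $m_0L$ is Cartier and the rational map attached to $|m_0L|$ realises the Iitaka fibration of $L$. First I would take $\pi\colon Y\to X$ to be any resolution dominating a fixed model on which $\mathrm{Mob}\,|m\pi^*L|$ is base-point-free for all sufficiently divisible multiples $m$ of $m_0$, and let $f\colon Y\to Z$ be obtained by Stein-factorising the morphism attached to this mobile part; then $\dim Z=\kappa(X,L)$ and, by the stability of the Iitaka fibration, $f$ is independent of the chosen $m$. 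The classical theorem gives $\kappa\big(F,(\pi^*L)|_F\big)=0$ for a very general fibre $F$, which is the assertion for $E=0$.

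The heart of the matter is that adding an effective $\pi$-exceptional divisor alters neither the section ring nor the mobile systems. Indeed, given any $\pi$-exceptional $E\ge 0$, choose a multiple $m$ of $m_0$ with $mE$ integral. Because $X$ is normal and $mE$ is effective and $\pi$-exceptional we have $\pi_*\OO_Y(mE)=\OO_X$, so the projection formula gives
$$\pi_*\OO_Y\big(m\pi^*L+mE\big)=\OO_X(mL).$$
Multiplication by the tautological section of $mE$ then identifies $H^0\big(Y,m\pi^*L\big)$ with $H^0\big(Y,m\pi^*L+mE\big)$, whence every member of $|m(\pi^*L+E)|$ is a member of $|m\pi^*L|$ plus the fixed divisor $mE$. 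Therefore $\mathrm{Mob}\,|m(\pi^*L+E)|=\mathrm{Mob}\,|m\pi^*L|$, and the one fibration $f$ simultaneously realises the Iitaka fibration of $\pi^*L+E$ on $Y$ for \emph{every} such $E$; that $f$ has the correct base dimension is reconfirmed by $\kappa(Y,\pi^*L+E)=\kappa(X,L)=\dim Z$, which is Remark \ref{rem:dimensions}(a).

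I would then conclude by invoking the fibre property of the Iitaka fibration for the divisor $\pi^*L+E$: since its Iitaka fibration is realised by $f$ on the sufficiently high model $Y$, a very general fibre satisfies $\kappa\big(F,(\pi^*L+E)|_F\big)=0$. (The opposite inequality $\kappa\big(F,(\pi^*L+E)|_F\big)\ge\kappa\big(F,(\pi^*L)|_F\big)=0$ is automatic from monotonicity of $\kappa$ under adding an effective divisor, but is not needed.) As there are only finitely many $\pi$-exceptional prime divisors on $Y$, the finitely many open conditions cutting out a good fibre $F$ depend only on their supports, so a single very general $F$ serves all $E$ at once; and the degenerate case $\kappa(X,L)=0$ is immediate, since then $Z$ is a point, $F=Y$, and the claim is exactly $\kappa(Y,\pi^*L+E)=0$.

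The principal obstacle, and the one step that must be verified rather than quoted, is the mobile-part invariance $\mathrm{Mob}\,|m(\pi^*L+E)|=\mathrm{Mob}\,|m\pi^*L|$ together with the ensuing claim that $f$ uniformly realises the Iitaka fibration of the perturbed divisor; once this is in place, the fibre statement is a direct appeal to the classical theorem. I would be careful to fix $\pi$, and hence $f$, depending only on $L$ before quantifying over $E$, so that the order of quantifiers in the lemma is honoured.
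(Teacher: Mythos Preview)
Your proposal is correct and follows essentially the same approach as the paper, which simply defers to \cite[Lemma 2.3]{LP18} and \cite[Theorem 2.1.33]{Laz04}; your argument spells out precisely the content of that reference---namely, that adding an effective $\pi$-exceptional divisor leaves the section ring and hence the Iitaka fibration unchanged---together with the paper's one additional remark that any resolution dominating the fixed model $X_\infty$ will do.
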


\begin{proof}
The formulation is slightly different than that of \cite[Lemma 2.3]{LP18}: here, $\pi$ is any sufficiently high resolution; in fact, it was implicitly used in \cite{LP18} in this form. The proof is the same as that of \cite[Lemma 2.3]{LP18}: the only thing to notice is that once one fixes the variety $X_\infty$ in the proof of \cite[Theorem 2.1.33]{Laz04}, one can freely replace it by any of its higher birational models.
\end{proof}

The following easy result does not seem to have a proper reference and will be used several times in this paper.

\begin{lem}\label{lem:invariantiitaka}
Let $X$ be a $\Q$-factorial projective variety, and let $D_1$ and $D_2$ be $\R$-divisors on $X$ such that $D_1\sim_\R D_2$.
\begin{enumerate}[\normalfont (a)]
\item If $D_1$ and $D_2$ are $\Q$-divisors, then $D_1\sim_\Q D_2$.
\item If $D_1\geq0$, $D_2\geq0$ and $\kappa(X,D_1)=0$, then $D_1=D_2$.
\end{enumerate}
\end{lem}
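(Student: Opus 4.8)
The plan is to handle parts (a) and (b) by separate, fairly elementary arguments.

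For part (a) I would reduce to linear algebra over $\Q$. Set $D:=D_1-D_2$; since $D\sim_\R0$, we may write $D=\sum_{i=1}^k r_i\,\ddiv f_i$ with $f_i\in\C(X)^{\times}$ and $r_i\in\R$. Let $V\subseteq\Div_\R(X)$ be the $\R$-span of $\ddiv f_1,\dots,\ddiv f_k$, and expand everything in the basis given by the finitely many prime divisors involved. A divisor in $V$ with rational coefficients then solves a linear system with rational data that has a real solution, hence a rational one; this shows that $V\cap\Div_\Q(X)$ is exactly the $\Q$-span of the $\ddiv f_i$. Since $D$ is a $\Q$-divisor lying in $V$, it lies in this $\Q$-span, so some positive integer multiple $ND$ is an integral combination of the $\ddiv f_i$ and hence principal; therefore $D_1\sim_\Q D_2$. ($\Q$-factoriality plays no role here.)

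For part (b), suppose towards a contradiction that $D_1\ne D_2$, and write $D_1-D_2=\sum_{i=1}^k r_i\,\ddiv f_i=:\Delta_0\ne0$, with $V$ as above. The key observation is that the positive part $\Delta_0^{+}=(D_1-D_2)^{+}$ is supported on $\Supp D_1$, because a prime with positive coefficient in $\Delta_0$ has positive coefficient in $D_1$ (as $D_2\ge0$). Consider the cone
$$K:=\{\,v\in V\ :\ v^{+}\text{ is supported on }\Supp D_1\,\},$$
which inside the finite-dimensional space $V$ is cut out by the finitely many inequalities $\mult_P v\le0$, one for each prime $P\notin\Supp D_1$ occurring in some $\ddiv f_i$; hence $K$ is a rational polyhedral cone. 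Since $\Delta_0\in K\setminus\{0\}$, the cone $K$ is nonzero and therefore contains a nonzero rational point $v^{\ast}$; being a rational point of $V$, the divisor $v^{\ast}$ is a $\Q$-linear combination of the $\ddiv f_i$, so $v^{\ast}\sim_\Q0$. For a small enough rational $\lambda>0$ the $\Q$-divisor $\Delta^{\ast}:=\lambda v^{\ast}$ satisfies $\Delta^{\ast}\le D_1$: on $\Supp D_1$ its coefficients are as small as we wish and hence below those of $D_1$, while off $\Supp D_1$ they are $\le0$ because $v^{\ast}\in K$.

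Now put $A:=(\Delta^{\ast})^{+}$ and $B:=(\Delta^{\ast})^{-}$. These are effective $\Q$-divisors with $A-B=\Delta^{\ast}\ne0$, so $A\ne B$, and $A\sim_\Q B$ since $\Delta^{\ast}\sim_\Q0$; as $A$ and $B$ are distinct effective $\Q$-divisors that are $\Q$-linearly equivalent, $\kappa(X,A)\ge1$ (indeed $NA\sim NB$ for suitable $N\in\N$, and $NB$ is an effective divisor in $|NA|$ distinct from $NA$, so $h^0(X,\OO_X(NA))\ge2$). On the other hand $A=(\Delta^{\ast})^{+}\le D_1$ because $D_1\ge0$, so $\lfloor mA\rfloor\le\lfloor mD_1\rfloor$ and hence $\OO_X(\lfloor mA\rfloor)\subseteq\OO_X(\lfloor mD_1\rfloor)$ for every $m$, giving $\kappa(X,A)\le\kappa(X,D_1)=0$. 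This contradiction proves $D_1=D_2$. The only genuinely delicate point is that $D_1$ and $D_2$ need not be rational, so one cannot simply clear denominators and invoke the standard rigidity of $\Q$-divisors of Iitaka dimension $0$; the device that circumvents this is the cone $K$, whose rationality (together with the possibly irrational witness $\Delta_0$ to $D_1\ne D_2$ lying in it) is exactly what lets us replace $\Delta_0$ by a rational divisor $\Delta^{\ast}$ still lying below $D_1$. I expect the routine bookkeeping — checking that $K$ is nonzero and rational and that a small rescaling of $v^{\ast}$ stays below $D_1$ — to be the only mildly fiddly part.
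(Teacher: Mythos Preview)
Your proof is correct. Part (a) is essentially the paper's argument: a rational linear system with a real solution has a rational one.

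For part (b) the paper takes a different route. It first \emph{reduces to the case that $D_1$ is a $\Q$-divisor}: choose a $\Q$-divisor $D_1'\geq D_1$ with $\Supp D_1'=\Supp D_1$, so that $\kappa(X,D_1')=0$ as well, and replace $(D_1,D_2)$ by $(D_1',\,D_2+(D_1'-D_1))$. With $D_1$ now rational, the set of effective divisors $G$ in the affine space $D_1+W_0$ (where $W_0$ is the $\R$-span of the $(f_i)$) is a \emph{rational} polytope; writing $D_2$ as a convex combination of rational vertices $G_i$, part (a) gives $G_i\sim_\Q D_1$, and then $\kappa(X,D_1)=0$ forces $G_i=D_1$ for all $i$, hence $D_2=D_1$. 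Your argument instead avoids modifying $D_1$ and works directly with the rational cone $K\subseteq V$ to extract a nonzero $\Q$-principal $\Delta^\ast\leq D_1$, whose positive and negative parts furnish two distinct $\Q$-linearly equivalent effective divisors with $A\leq D_1$, contradicting $\kappa(X,D_1)=0$. Both approaches hinge on the same principle---rational polyhedral objects defined over $\Q$ have dense rational points---but the paper's reduction step makes the endgame a one-line appeal to rigidity of $\Q$-divisors of Iitaka dimension $0$, whereas your approach trades that reduction for the slightly more hands-on contradiction via $h^0\geq2$. Neither is clearly superior; the paper's version is a bit shorter once the reduction is in place, while yours is more self-contained.
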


\begin{proof}
The proof is similar to that of \cite[Lemma 2.3]{CL12a}.

I first show (a). There exist real numbers $r_1,\dots,r_k$ and rational functions $f_1,\dots,f_k\in k(X)$ such that $D_2=D_1+\sum_{i=1}^k r_i (f_i)$. The system of linear equations $D_2=D_1+\sum_{i=1}^k x_i (f_i)$ with rational coefficients has a real solution $(r_1,\dots,r_k)$, hence it has a rational solution, which gives (a).

Next I show (b). Let $D_1'$ be a $\Q$-divisor such that $D_1'\geq D_1$ and $\Supp D_1'=\Supp D_1$. Then $D_1'\sim_\R D_2+(D_1'-D_1)$ and $\kappa(X,D_1')=0$ by Remark \ref{rem:dimensions}(b). Thus, by replacing $D_1$ by $D_1'$ and $D_2$ by $D_2+(D_1'-D_1)$, we may assume that $D_1$ is a $\Q$-divisor.

There exist real numbers $r_1,\dots,r_k$ and rational functions $f_1,\dots,f_k\in k(X)$ such that $D_2=D_1+\sum_{i=1}^k r_i (f_i)$. Let $W\subseteq \Div_{\mathbb R}(X)$ be the subspace spanned by the components of $D_1$ and of all $(f_i)$. Let $W_0\subseteq W$ be the subspace of divisors which are $\R$-linear combinations of the principal divisors $(f_1),\dots,(f_k)$. Then $W_0$ is a rational subspace of $W$, and consider the quotient map $\pi\colon W\to W/W_0$. Then the set
$$V:=\{G\in \pi^{-1}(\pi(D_1))\mid G\geq 0\}$$
is not empty as it contains $D_1$ and $D_2$, and it is a rational affine subspace of $W$ since $D_1$ is a $\Q$-divisor. Thus, there exist $\Q$-divisors $G_1,\dots,G_\ell\in V$ and positive real numbers $t_1,\dots,t_\ell$ such that 
$$D_2=\sum_{i=1}^\ell t_iG_i\quad\text{and}\quad \sum_{i=1}^\ell t_i=1;$$
in particular, we have $D_1\sim_\R G_i$ for every $i$. Part (a) yields $D_1\sim_\Q G_i$ for every $i$, hence $D_1=G_i$ for each $i$ since $\kappa(X,D_1)=0$ and since $D_1$ and all $G_i$ are $\Q$-divisors. Therefore, $D_2=\sum_{i=1}^\ell t_iD_1=D_1$, as desired.
\end{proof}

\section{Proofs of the main results}

I start with the following easy but important lemma.

\begin{lem}\label{lem:jiao}
Let $X$ be a $\Q$-factorial projective variety and let $H$ be an effective $\R$-divisor on $X$. Let $G$ be an $\R$-divisor on $X$ such that there exists a strictly decreasing sequence of real numbers $(\varepsilon_n)_{n\in\N}$ such that $\lim\limits_{n\to\infty}\varepsilon_n=0$ and $\kappa_\iota(X,G+\varepsilon_n H)=0$. Then $\kappa_\iota(X,G)=0$.
\end{lem}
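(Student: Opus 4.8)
The plan is to exploit the fact that $\kappa_\iota(X,G+\varepsilon_n H)=0$ gives, for each $n$, an effective divisor $\R$-linearly equivalent to $G+\varepsilon_n H$, and to take a limit. First I would fix a component-wise bound: since $H\geq0$ and the $\varepsilon_n$ are decreasing, all the divisors $G+\varepsilon_n H$ lie in the bounded region between $G$ and $G+\varepsilon_1 H$. For each $n$ choose an effective $\R$-divisor $D_n\sim_\R G+\varepsilon_n H$ with $\kappa(X,D_n)=0$ (this is what invariant Iitaka dimension $0$ means). The key point is that an effective divisor with $\kappa=0$ in a fixed $\R$-linear equivalence class is essentially unique: by Lemma \ref{lem:invariantiitaka}(b), if $D\geq0$, $D'\geq0$, $D\sim_\R D'$ and $\kappa(X,D)=0$, then $D=D'$.

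The main step is then to compare $D_n$ and $D_m$ for $n<m$. We have $D_n\sim_\R G+\varepsilon_n H = (G+\varepsilon_m H) + (\varepsilon_n-\varepsilon_m)H \sim_\R D_m + (\varepsilon_n-\varepsilon_m)H$, and $D_m + (\varepsilon_n-\varepsilon_m)H\geq0$ since $\varepsilon_n>\varepsilon_m$ and $H\geq0$. Because $\kappa(X,D_n)=0$, Lemma \ref{lem:invariantiitaka}(b) forces
$$D_n = D_m + (\varepsilon_n-\varepsilon_m)H.$$
In particular $D_n - D_m = (\varepsilon_n-\varepsilon_m)H$ for all $n<m$. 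Fixing $n=1$ and letting $m\to\infty$, we get $D_m = D_1 - (\varepsilon_1-\varepsilon_m)H \to D_1 - \varepsilon_1 H$ in $\Div_\R(X)$; set $D:=D_1-\varepsilon_1 H$. From $D_m = D + \varepsilon_m H$ and $D_m\geq 0$ we obtain $D + \varepsilon_m H\geq0$ for all $m$, hence, letting $m\to\infty$, $D\geq0$. Moreover $D = D_m - \varepsilon_m H \sim_\R (G+\varepsilon_m H) - \varepsilon_m H = G$, so $D$ is an effective $\R$-divisor with $D\sim_\R G$; this already shows $\kappa_\iota(X,G)\geq0$, and in fact $D=D_1-\varepsilon_1 H$ where $\kappa(X,D_1)=0$.

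It remains to upgrade $\kappa_\iota(X,G)\geq0$ to $\kappa_\iota(X,G)=0$. Here I would use that $\kappa_\iota$ is computed by any effective representative and is monotone in the support (Remark \ref{rem:dimensions}(b)): from $D = D_1 - \varepsilon_1 H\leq D_1$ we would like to conclude $\Supp D\subseteq\Supp D_1$, but subtraction can create or destroy components, so instead I compare via $D_1$. Since $D\geq0$ and $D\leq D_1$ (because $D_1 - D=\varepsilon_1 H\geq0$), the support of $D$ is contained in the support of $D_1$, and therefore $\kappa_\iota(X,D)\leq\kappa_\iota(X,D_1)=\kappa(X,D_1)=0$ by Remark \ref{rem:dimensions}(b). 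As $\kappa_\iota(X,G)=\kappa_\iota(X,D)$, we conclude $\kappa_\iota(X,G)=0$.

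The step I expect to be the main (minor) obstacle is the rigidity input: making sure Lemma \ref{lem:invariantiitaka}(b) applies cleanly requires that $D_m+(\varepsilon_n-\varepsilon_m)H$ is genuinely effective and $\R$-linearly equivalent to $D_n$, and that the $D_n$ can be chosen with $\kappa=0$ rather than merely $\kappa_\iota=0$ — but these follow directly from the definition of invariant Iitaka dimension and from $H\geq0$ together with the monotonicity of the sequence $(\varepsilon_n)$. Everything else is a routine limit argument in the finite-dimensional space spanned by the relevant components.
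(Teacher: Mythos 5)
Your proof is correct and follows essentially the same route as the paper: choose effective representatives $D_n\sim_\R G+\varepsilon_n H$, use Lemma \ref{lem:invariantiitaka}(b) to force $D_n=D_m+(\varepsilon_n-\varepsilon_m)H$, pass to the limit to get an effective divisor $D_1-\varepsilon_1H\sim_\R G$, and bound $\kappa_\iota(X,G)$ above by $\kappa_\iota(X,D_1)=0$. The paper's proof is the same argument with $n=1$ fixed from the start, so there is nothing to change.
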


\begin{proof}
The proof is implicit in the proof of \cite[Corollary 1.6]{Jia22} and in Step 3 of the proof of \cite[Theorem 3.1]{LP18}, and here I provide the details.

For each $n\in\N$, let $D_n$ be an effective divisor $\R$-linearly equivalent to $G+\varepsilon_n H$. Then for any positive integer $k$ we have
$$D_k+(\varepsilon_1-\varepsilon_k)H\sim_\R D_1.$$
This implies
$$D_k+(\varepsilon_1-\varepsilon_k)H=D_1$$
by Lemma \ref{lem:invariantiitaka}(b) since $\kappa(X,D_1)=0$, and in particular,
$$D_1\geq(\varepsilon_1-\varepsilon_k)H.$$
Letting $k\to\infty$ we obtain $D_1\geq \varepsilon_1 H$. Since $G\sim_\R D_1-\varepsilon_1 H\geq0$, this gives $\kappa_\iota(X,G)\geq0$. On the other hand, we obviously have $\kappa_\iota(X,G)\leq\kappa_\iota(X,D_1)=0$, and thus $\kappa_\iota(X,G)=0$.
\end{proof}

The following result generalises \cite[Proposition 3.2]{LP18} to pairs with log canonical singularities.

\begin{pro}\label{pro:contraction}
Assume the existence of good models for projective log canonical pairs in dimensions at most $n-1$.

Let $(X,\Delta)$ be a projective log canonical pair such that $K_X+\Delta$ is pseudoeffective. If there exists a fibration $X\to Z$ to a normal projective variety $Z$ such that $\dim Z\geq 1$ and $K_X+\Delta$ is not big over $Z$, then $(X,\Delta)$ has a good model.
\end{pro}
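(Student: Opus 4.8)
The strategy is to pass to a model where the structure of the Iitaka fibration is visible, apply the induction hypothesis on the very general fibres, and then use a canonical-bundle-formula/relative-good-model argument to descend a good model back to $X$. First I would take a dlt blowup and a suitable common resolution so that we may assume $(X,\Delta)$ is $\Q$-factorial dlt (this does not affect the existence of a good model by Corollary \ref{cor:upanddown}, nor the relevant dimensions by Remark \ref{rem:dimensions}(a)), and I would replace the fibration $X\to Z$ by its Stein factorization together with a sufficiently high model so that the map $X\to Z$ is a morphism with connected fibres and $Z$ is smooth. Since $K_X+\Delta$ is not big over $Z$, the relative invariant Iitaka dimension satisfies $\kappa_\iota(F,(K_X+\Delta)|_F)<\dim F$ for a very general fibre $F$, and by adjunction $(F,\Delta|_F)$ is a projective log canonical pair with $K_F+\Delta|_F\sim_\R (K_X+\Delta)|_F$ pseudoeffective (pseudoeffectivity on the very general fibre follows because $K_X+\Delta$ is pseudoeffective on $X$ and the fibres cover $X$). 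As $\dim F=\dim X-\dim Z\le n-1$, the assumed existence of good models in dimension at most $n-1$ gives that $(F,\Delta|_F)$ has a good model.

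The second step is to promote this fibrewise information to a relative statement over $Z$: I want to conclude that $(X,\Delta)$ has a good model \emph{over} $Z$, i.e. that after running a relative $(K_X+\Delta)$-MMP over $Z$ we reach a model on which $K_X+\Delta$ is relatively semiample. This is exactly the kind of statement proved by Hashizume–Hu and Birkar-type existence results once one knows the existence of good models on the general fibre — concretely, one invokes the result that a log canonical pair has a good model over the base provided its restriction to the generic (equivalently, very general) fibre has a good model, combined with Lemma \ref{lem:goodmodelsequivalence} to upgrade "good model in the sense of Birkar–Hashizume" to "good model". Running the relative MMP we obtain a birational contraction $X\dashrightarrow X'$ over $Z$ with $X'\to Z$ and $K_{X'}+\Delta'$ semiample over $Z$; let $X'\to V$ be the relative Iitaka fibration over $Z$, so that $K_{X'}+\Delta'\sim_{\R,Z} g^*A$ for some $\R$-divisor $A$ on $V$ that is ample over $Z$, and $\dim V<\dim X'$ because $K_X+\Delta$ is not big over $Z$.

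The final step is the canonical bundle formula. Applying the log-canonical version of the Ambro–Fujino canonical bundle formula to $g\colon (X',\Delta')\to V$, we get an effective $\R$-divisor $\Delta_V$ making $(V,\Delta_V)$ log canonical and $K_{X'}+\Delta'\sim_\R g^*(K_V+\Delta_V)+ (\text{something ample over }Z)$; more precisely $K_V+\Delta_V$ has the same relevant positivity as $K_X+\Delta$, and $K_V+\Delta_V$ is pseudoeffective because $K_X+\Delta$ is. Since $\dim V<\dim X'\le n$, actually $\dim V\le n-1$, so by the induction hypothesis $(V,\Delta_V)$ has a good model. One then argues — this is the standard "pull back a good model through the canonical bundle formula" step, using that $K_{X'}+\Delta'$ is a pullback from $V$ up to something semiample/ample over $Z$ and over $V$ — that $(X',\Delta')$, hence $(X,\Delta)$, has a good model; here Lemma \ref{lem:goodmodelsequivalence} and Corollary \ref{cor:upanddown} are again used to keep everything in the "honest good model" world and to transfer back along the birational contraction.

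The main obstacle I expect is the second step: ensuring that one can actually run the relative MMP over $Z$ and reach a relatively good model using only the assumed existence of good models in dimension $\le n-1$ on the fibres, rather than the full MMP in dimension $n$. This requires a careful invocation of the Hashizume–Hu machinery (existence of good models "in the sense of Birkar–Hashizume" from fibrewise data) together with Lemma \ref{lem:goodmodelsequivalence}, and one has to check the log canonical (as opposed to klt) hypotheses propagate correctly through adjunction, the canonical bundle formula, and the dlt blowup. The gluing in the last step — deducing a global good model of $(X',\Delta')$ from a good model of the base $(V,\Delta_V)$ and relative semiampleness over $Z$ — is also delicate, but it is by now a fairly routine argument in this circle of ideas (cf. the proof of \cite[Proposition 3.2]{LP18}).
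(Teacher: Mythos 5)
Your overall skeleton matches the paper's: use the induction hypothesis on the fibres (this is what makes $K_X+\Delta$ effective over $Z$, via \cite[Lemma 3.2.1]{BCHM}), pass to a relative good model over $Z$ via the Hashizume--Hu machinery \cite[Theorem 1.2]{HH20}, take the relative canonical fibration $\varphi\colon X_{\min}\to X_{\can}$ over $Z$ with $K_{X_{\min}}+\Delta_{\min}\sim_\R\varphi^*A$ and $\dim X_{\can}<\dim X$ because $K_X+\Delta$ is not big over $Z$. Up to this point your plan is essentially the paper's proof.

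The genuine gap is in your final step. You propose to apply the \enquote{log canonical version of the Ambro--Fujino canonical bundle formula} to produce an effective divisor $\Delta_V$ with $(V,\Delta_V)$ log canonical and $K_{X'}+\Delta'\sim_\R g^*(K_V+\Delta_V)+\cdots$, and then to apply the dimension $\leq n-1$ hypothesis to $(V,\Delta_V)$. This does not work as stated: for an lc-trivial fibration the canonical bundle formula only yields a \emph{generalized} pair on the base, whose moduli part is a nef b-divisor; making it into an honest lc pair (equivalently, the b-semiampleness/effective descent of the moduli part) is conjectural in this generality, so the induction hypothesis --- which is about log canonical pairs --- cannot be invoked on $(V,\Delta_V)$, and the subsequent \enquote{pull back a good model through the canonical bundle formula} step is far from routine for lc pairs. (A smaller inaccuracy: after passing to the relative canonical model one has $K_{X'}+\Delta'\sim_\R g^*A'$ for a divisor $A'$ on the base, with no extra ample summand to add.) The paper sidesteps exactly this issue by quoting Hashizume's result \cite[Theorem 1.5]{Has19}, which gives a good model in the sense of Birkar--Hashizume for any lc pair whose log canonical divisor is $\R$-linearly equivalent to a pullback along a fibration to a lower-dimensional base, under the lower-dimensional good model hypothesis; it then transfers the conclusion back to $(X,\Delta)$ not by composing birational models but numerically, using the invariance of $\kappa_\iota$ and $\nu$ under the relative MMP \cite[Lemma 2.6]{LM21} and the criterion $\kappa_\iota=\nu\Rightarrow$ existence of a good model \cite[Lemma 4.1]{LM21}. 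Your proposal would close if you replaced the canonical bundle formula step by a reference of this type, and replaced the vague \enquote{gluing} step by such a numerical argument (or by a careful use of Lemma~\ref{lem:goodmodelsequivalence} applied to $(X,\Delta)$ itself).
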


\begin{proof}
The divisor $K_X+\Delta$ is effective over $Z$ by induction on the dimension and by \cite[Lemma 3.2.1]{BCHM}. By \cite[Theorem 1.2]{HH20} there exists a good model in the sense of Birkar--Hashizume $(X,\Delta)\dashrightarrow (X_{\min},\Delta_{\min})$ of $(X,\Delta)$ over $Z$. In particular, we have
\begin{equation}\label{eq:a}
\kappa_\iota(X,K_X+\Delta)=\kappa_\iota(X_{\min},K_{X_{\min}}+\Delta_{\min})
\end{equation}
and
\begin{equation}\label{eq:b}
\nu(X,K_X+\Delta)=\nu(X_{\min},K_{X_{\min}}+\Delta_{\min})
\end{equation}
by \cite[Lemma 2.6]{LM21}.\footnote{Note that \cite[Lemma 2.6]{LM21} is stated in the absolute setting, but the proof works in the relative setting treated here.} Furthermore, there exists a fibration $\varphi\colon X_{\min}\to X_\can$ over $Z$ such that
\begin{equation}\label{eq:d}
K_{X_{\min}}+\Delta_{\min}\sim_\R\varphi^*A
\end{equation}
for some $\R$-divisor $A$ on $X_\can$ which is ample over $Z$.
\[
\xymatrix{ 
X \ar[dr] \ar@{-->}[r] & X_{\min} \ar[d] \ar[r]^{\varphi} & X_\can \ar[dl]\\
& Z & 
}
\]
Since $K_X+\Delta$ is not big over $Z$, the divisor $K_{X_{\min}}+\Delta_{\min}$ is not big over $Z$; this follows, for instance, by applying \cite[Lemma 2.6]{LM21} to general fibres of the morphisms $X\to Z$ and $X_{\min}\to Z$. Therefore, $\dim X_\can<\dim X$. By \cite[Theorem 1.5]{Has19} and by \eqref{eq:d}, there exists a good model in the sense of Birkar--Hashizume of $(X_{\min},\Delta_{\min})$, hence
\begin{equation}\label{eq:c}
\kappa_\iota(X_{\min},K_{X_{\min}}+\Delta_{\min})=\nu(X_{\min},K_{X_{\min}}+\Delta_{\min})
\end{equation}
by \cite[Lemma 2.6]{LM21}. But then the pair $(X,\Delta)$ has a good model by \eqref{eq:a}, \eqref{eq:b}, \eqref{eq:c} and by \cite[Lemma 4.1]{LM21}. This proves the proposition.
\end{proof}

Now I can prove the results announced in the introduction.

\begin{proof}[Proof of Theorem \ref{thm:effectivethreshold}]
Let $\rho\colon W\to X$ be a log resolution of $X$. Then there exist effective $\R$-divisors $\Delta_W$ and $E_E$ without common components such that
$$K_W+\Delta_W\sim_\R\rho^*(K_X+\Delta)+E_W.$$
Note that 
$$\kappa_\iota(X,K_X+\Delta+G)=\kappa_\iota(W,K_W+\Delta_W+\rho^*G)$$
and
$$\kappa_\iota(X,K_X+\Delta+\delta G)=\kappa_\iota(W,K_W+\Delta_W+\delta\rho^*G)$$
by Remark \ref{rem:dimensions}(a), and the pair $(X,\Delta)$ has a good model if and only if the pair $(W,\Delta_W)$ has a good model by Corollary \ref{cor:upanddown}. Therefore, by replacing $(X,\Delta)$ by $(W,\Delta_W)$ and $G$ by $\rho^*G$, we may assume that $(X,\Delta)$ is log smooth.

If $\kappa_\iota(X,K_X+\Delta+G)=0$, then $\kappa_\iota(X,K_X+\Delta+\delta G)=0$ by Lemma \ref{lem:jiao}. Therefore, I assume for the remainder of the proof that $\kappa_\iota(X,K_X+\Delta+G)>0$. The argument is inspired by the proof of \cite[Theorem 3.3]{LP18}.

Let $D\geq0$ be an $\R$-divisor such that $K_X+\Delta+G\sim_\R D$. Pick an $\R$-divisor $D'\geq0$ such that $\Supp D'=\Supp D$ and such that $D+D'$ is a $\Q$-divisor. We have
\begin{equation}\label{eq:5}
K_X+\Delta+G+D'\sim_\R D+D'
\end{equation}
and thus
$$0<\kappa(X,D+D')<n$$
by Remark \ref{rem:dimensions}(b). By Lemma \ref{lem:iitaka} there exists a log resolution $\pi\colon Y\to X$ of the pair $(X,\Delta)$ and a fibration $f\colon Y\to Z$:
\[
\xymatrix{ 
Y \ar[d]_{\pi} \ar[r]^{f} & Z\\
X & 
}
\]
such that $\dim Z=\kappa(X,D+D')$ and for a very general fibre $F$ of $f$ and for every $\pi$-exceptional $\Q$-divisor $H$ on $Y$ we have
\begin{equation}\label{eq:exceptional2}
\kappa\big(F,\big(\pi^*(D+D')+H\big)|_F\big)=0.
\end{equation}
There exist effective $\R$-divisors $\Gamma$ and $E$ without common components such that
$$K_Y+\Gamma\sim_\R\pi^*(K_X+\Delta)+E;$$
thus, $(Y,\Gamma)$ is a log smooth log canonical pair such that $K_Y+\Gamma$ is pseudoeffective. By \eqref{eq:5} and \eqref{eq:exceptional2} we have
\begin{align}
\kappa_\iota\big(F,(K_Y&+\Gamma+\pi^*G+\pi^*D'+\lceil E\rceil-E)|_F\big)\label{eq:Gamma2}\\
&=\kappa_\iota\big(F,\big(\pi^*(K_X+\Delta+G+D')+\lceil E\rceil\big)|_F\big) \notag\\
&=\kappa\big(F,\big(\pi^*(D+D')+\lceil E\rceil\big)|_F\big)=0.\notag
\end{align}
Since $\pi^*G+\pi^*D'+\lceil E\rceil-E\geq0$ and $\kappa_\iota\big(F,(K_Y+\Gamma)|_F\big)\geq0$ by induction on the dimension, the equation \eqref{eq:Gamma2} implies
$$\kappa_\iota\big(F,(K_Y+\Gamma)|_F\big)=0.$$ 
But then $(Y,\Gamma)$ has a good model by Proposition \ref{pro:contraction}, thus $(X,\Delta)$ has a good model by Corollary \ref{cor:upanddown}, which proves (b).
\end{proof}

\begin{proof}[Proof of Theorem \ref{thm:positivekodaira}]
If $\kappa_\iota(X,K_X+\Delta)=n$, then we conclude by \cite[Theorem 1.2]{HH20}. If $0<\kappa_\iota(X,K_X+\Delta)<n$, then we conclude by Theorem \ref{thm:effectivethreshold}(b), by setting $G=0$ in that statement.
\end{proof}

\begin{proof}[Proof of Theorem \ref{thm:main2}]
Note first that (a) follows from (b). Indeed, if $K_X$ is pseudoeffective, then (b) gives in particular that $\kappa(X,K_X)\geq0$, which yields (a). If $K_X$ is not pseudoeffective, then (a) follows from Remark \ref{rem:notpsef}.

\medskip

For (b), if $\kappa_\iota(X,K_X+\Delta)>0$, then we conclude by Theorem \ref{thm:effectivethreshold}(b). If $\kappa_\iota(X,K_X+\Delta)=0$, then $\nu(X,K_X+\Delta)=0$ as the pair $(X,\Delta)$ has a good model. Since $\nu(X,K_X+\Delta')\leq\nu(X,K_X+\Delta)=0$, we have $\nu(X,K_X+\Delta')=0$, and we conclude by Theorem \ref{thm:nu=0}.

\medskip

For (i), let $t$ be a positive real number such that $t\Delta'\leq\Delta$. By (b), the pair $(X,t\Delta')$ has a good model; in particular,
\begin{equation}\label{eq:4}
\kappa_\iota(X,K_X+t\Delta')=\nu(X,K_X+t\Delta').
\end{equation}
By (a) there exists an effective $\Q$-divisor $D$ such that $K_X\sim_\Q D$, hence 
$$K_X+t\Delta'\sim_\R D+t\Delta',\quad K_X+\Delta'\sim_\R D+\Delta',$$
and
$$\Supp(D+t\Delta')=\Supp(D+\Delta').$$
This, together with \eqref{eq:4} and Remark \ref{rem:dimensions}(b), implies that $\kappa_\iota(X,K_X+\Delta')=\nu(X,K_X+\Delta')$, hence $(X,\Delta')$ has a good model by \cite[Lemma 4.1]{LM21}.

\medskip

Finally, we show (ii). By (a) there exists an effective $\Q$-divisor $D$ such that $K_X\sim_\Q D$, hence 
$$K_X+\Delta\sim_\R D+\Delta,\quad K_X+\Delta''\sim_\R D+\Delta'',$$
and
$$\Supp(D+\Delta)\subseteq\Supp(D+\Delta'').$$
Together with Remark \ref{rem:dimensions}(b) this implies that 
$$\kappa_\iota(X,K_X+\Delta'')\geq\kappa_\iota(X,K_X+\Delta)>0,$$
and we conclude by Theorem \ref{thm:positivekodaira}.
\end{proof}

\begin{proof}[Proof of Corollary \ref{cor:onetoall}]
By \cite[Theorem B]{LT22} we may run a $(K_X+\Delta)$-MMP which terminates with a minimal model $(Y,\Delta_Y)$. Since we assume the semiampleness part of the Abundance conjecture, the divisor $K_Y+\Delta_Y$ is semiample and therefore, the pair $(Y,\Delta_Y)$ is a good model of $(X,\Delta)$.

If now $\Delta'$ is an $\R$-divisor on $X$ such that the pair $(X,\Delta')$ is log canonical and $K_X+\Delta'$ is pseudoeffective, then $\kappa_\iota(X,K_X+\Delta')\geq0$ by Theorem \ref{thm:main2}(a). By the same argument as in the previous paragraph applied to the pair $(X,\Delta')$ we conclude that $(X,\Delta')$ has a good model.
\end{proof}
	
	\bibliographystyle{amsalpha}
	\bibliography{biblio}
	
\end{document}